\newtheorem{thm}{Theorem}[section]
\newtheorem{lem}[thm]{Lemma}
\newtheorem{prop}[thm]{Proposition}
\theoremstyle{definition}
\newtheorem{defn}[thm]{Definition}
\theoremstyle{remark}
\numberwithin{equation}{section}
\theoremstyle{remark}
\newtheorem{exam}[thm]{Example}
\newcommand{\mbb}{\mathbb}
\newcommand{\ra}{\rightarrow}
\newcommand{\pa}{\partial}
\newcommand{\ov}{\overline}
\newcommand{\sm}{\setminus}
\newcommand{\ep}{\epsilon}
\newcommand{\no}{\noindent}
\newcommand{\al}{\alpha}
\newcommand{\cal}{\mathcal}
\newcommand{\ti}{\tilde}
\newcommand{\la}{\lambda}
\newcommand{\be}{\beta}
\begin{document}
\title{Few remarks on the Poincar\'{e} metric on a singular holomorphic foliation}
\keywords{Poincar\'{e} metric, kernel convergence, singular Riemann surface foliation}
\thanks{The author is supported by the Labex CEMPI (ANR-11-LABX-0007-01)}
\subjclass{Primary: 32S65, 32M25  ; Secondary : 30F45}
\author{Sahil Gehlawat}

\address{Universit\'{e} de Lille, Laboratoire de Math\'{e}matiques Paul Painlev\'{e}, CNRS U.M.R. 8524, 59655
Villeneuve d’Ascq Cedex, France.}
\email{sahil.gehlawat@univ-lille.fr}

\begin{abstract} 
 Let $\mathcal F$ be a Riemann surface foliation on $M \sm E$, where $M$ is a complex manifold and $E \subset M$ is a closed set. Assume that $\cal{F}$ is hyperbolic, i.e., all leaves of the foliation $\cal{F}$ are hyperbolic Riemann surface. Fix a hermitian metric $g$ on $M$. We will consider the Verjovsky's modulus of uniformization map $\eta$, which measures the largest possible derivative in the class of holomorphic maps from the unit disk into the leaves of $\cal{F}$. Various results are known to ensure the continuity of the map $\eta$ along the transverse directions, with suitable conditions on $M$, $\cal{F}$ and $E$. For a domain $U \subset M$, let $\cal{F}_{U}$ be the holomorphic foliation given by the restriction of $\cal{F}$ to the domain $U$, i.e., $\cal{F}\vert_{U}$. We will consider the modulus of uniformization map $\eta_{U}$ corresponding to the foliation $\cal{F}_{U}$, and study its variation when the corresponding domain $U$ varies in the Caratheodory kernel sense, motivated by the work of Lins Neto--Martins.
\end{abstract}  

\maketitle 

\section{Introduction}

Let $M$ be a complex manifold of dimension $N$, and $E \subset M$ a closed subset of codimension atleast 2. Recall that a singular holomorphic foliation by Riemann surfaces $\cal{F}$ with singular set $E$ is defined by an atlas of flow-boxes $(U_{\al}, \phi_{\al})$ where $\phi_{\alpha} : U_{\alpha} \ra \mbb D \times \mbb D^{N-1}$ ($\mbb D \subset \mbb C$ is the open unit disc) and the transition maps 
$\phi_{\al\be}$ are holomorphic in $(x, y) \in \mbb D \times \mbb D^{N-1}$ and are of the form
\[
\phi_{\al \be} = \phi_{\al \be}(x, y) = \phi_{\al} \circ \phi^{-1}_{\be}(x, y) =  (P(x, y), Q(y)).
\]
The leaves of $\mathcal F$ are Riemann surfaces, which are locally given by $\phi^{-1}_{\al}(\{y = \mbox{constant} \})$. A leaf passing through a given $p \in M \sm E$ will be denoted by $L_p$. A hyperbolic Riemann surface foliation $(M, E, \mathcal F)$ is a singular holomorphic foliation in which every leaf $L_{p}$ of $\cal{F}$ is a hyperbolic Riemann surface.
 Fix a hermitian metric $g$ on $M$ and denote the length of a tangent vector $v$  by $\vert v \vert_{g}$. Consider the space $\mathcal{O}(\mbb{D}, M\sm E)$, which is the set of all holomorphic maps from the unit disc $\mbb D \subset \mbb C$ into the complex manifold $M \sm E$. Now consider the subset $\mathcal{O}(\mbb{D}, \cal{F}) \subset \mathcal{O}(\mbb{D}, M\sm E)$, the set of holomorphic maps $f \in \mathcal{O}(\mbb{D}, M\sm E)$ with values in a leaf of the foliation $\cal{F}$, that is $f(\mbb{D}) \subset L$, for some leaf $L$ of $\cal{F}$. Consider the map $\eta : M \sm E \ra (0, \infty)$ given by
\[
\eta(z) := \sup \left \{ \vert f'(0) \vert_g : f \in  \mathcal O(\mbb D, \mathcal F), f(0) = z \right\}.
\]
One can check that $\eta > 0$, and that it achieves its extremal value with the map $\pi_z$, where $\pi_z : \mbb D  \ra L_z$  is the universal covering map chosen such that $\pi_z(0) = z$ . Thus, $\eta(z) = \vert \pi'_z(0) \vert_g$. For a leaf $L \subset \cal{F}$ and a vector $v$ tangent to $L$ at $z$, let $\la_{ L}$ denote the Poincar\'{e} metric on $L$ and $\vert v \vert_{L}$ denote the Poincar\'{e} length of $v$. The extremal property of the Kobayashi metric shows that $\vert v \vert_g = \eta(z) \vert v \vert_{L}$. Symbolically, this will be written as $g = \eta \cdot \la_{L}$, and it can be understood that the restriction of $g/\eta$ to a leaf $L$ induces the corresponding Poincar\'{e} metric $\la_{L}$ on it.

\medskip

Since the Poincar\'{e} metric $\la_{L}$ for a leaf $L$ is a positive real-analytic function on $L$ and $g$ is smooth on $M$, therefore $\eta$ is also smooth along leaves. However, the regularity of $\eta$ along transverse directions is not a priori clear. In fact, the regularity of $\eta$ is related to the variation of $\la_{L}$ along such directions. This question of the regularity of $\eta$ corresponding to a hyperbolic singular foliation $\cal{F}$ has been studied under various hypotheses on $M, E$ and 
$\mathcal F$. One can have a look at Verjovsky \cite{V}, Lins Neto \cite{N1, N2}, Candel \cite{Ca} in which the continuity of $\eta$ on $M \sm E$ is established. Recently, Dinh--Nguy\^{e}n--Sibony \cite{DNS1, DNS2} gave a stronger estimate on the modulus of continuity 
of $\eta$ under suitable hypotheses on $M, E$ and $\mathcal F$. Related work on this theme can be found in \cite{CLS}, \cite{CG}, \cite{GV2}, \cite{FB} and \cite{FS}. To see few ramifications of studying the transverse regularity of the leafwise Poincar\'{e} metric, one can look at \cite{Ng1}, \cite{Ng2}, where Nguyen highlights its utility in studying the ergodic properties of Riemann surface laminations.

\medskip

In \cite{NM}, the variation of $\eta$ with respect to its domain of definition was studied. To describe it, let $U \subset M$ be an open subset, denote the restriction of the foliation $\mathcal F$ to $U$, that is $\cal{F}\vert_{U}$ by $\mathcal {F}_U$. Let $\eta_{U} : U \ra (0, \infty)$ be the positive function corresponding to the foliation $\mathcal F_U$. Note that $\eta_{U}$ is defined using holomorphic maps from $\mbb D$ into $L_{p, U}$ as $p$ varies in $U$. The following facts were established in \cite{NM} by Lins Neto--Martins. First, that $\eta_{U}$ is a monotone function of $U$, i.e., for domains $U_{1} \subset U_{2}$, $\eta_{U_{1}}(p) \le \eta_{U_{2}}(p)$ for all $p \in U_{1}$. Also, if $\{W_n\}$ is an increasing exhaustion of $W$ and the singular set of foliation $\cal{F}$ is discrete, then $\eta_{W_n} \ra \eta_{W}$ pointwise on $W \sm E$. The convergence is uniform on compact subsets of $W \sm E$ if the functions $\eta_{W_n}$ and 
$\eta_{W}$ are continuous. Similar observations about the pointwise convergence of $\eta_{W_n}$ were made by the author (with co-author Kaushal Verma) in \cite{GV1}, where the singular set $E$ is assumed to be discrete, $W$ be a bounded taut domain and $W_n$ converges to $W$ in a Hausdorff sense. The uniform convergence of $\eta_{W_n}$ to $\eta_W$ on the compacts of $W \sm E$ was also established in \cite{GV1}, with the additional hypotheses that $\eta_W$ is continuous on $W \sm E$ and that there exist another bounded taut domain $V \subset M$ such that $\ov{W} \subset V$. In \cite{GV2}, these observations were proved to be true for the case when $E$ is not discrete. It was also proved that if the foliation $\cal{F}$ is transversal type (see \cite{GV2} for the definition), then the uniform convergence will be on the compacts of $W$ instead of $W \sm E$.

\medskip
 The aim of this note is to study the variation of the function $\eta_{W_n}$ of the foliation $\cal{F}_{W_n}$ when the domains $W_n$ converges to the domain $W$ in the kernel sense (see below for definitions). We will observe that outside a \textit{defective} set, the convergence results still holds true. We will also see that these results are a generalization to the similar observations made in \cite{GV1,GV2} for the case when domain $U$ varies in Hausdorff sense, and in \cite{NM} where $U$ varies in monotonic increasing fashion. To keep things simple, unless otherwise mentioned we will fix $M = U \subset \mbb{C}^N$, where $U$ is a connected open subset with the metric $g$ to be the Euclidean metric throughout the article. Consider $W_n , W \subset U \subset \mbb{C}^N$ be connected open subsets with $0 \in W_n, W$ for all $n$. Let $\cal{F}$ a hyperbolic singular holomorphic foliation on $U$ with singular set $E \subset U$. Throughout the article, we will denote the foliation $\cal{F}_{W_n}$ by $\cal{F}_n$, the function $\eta_{W_n}$ by $\eta_{n}$, and the leaf $L$ of $\cal{F}$ on $U$ passing through $p \in U$ by $L_{p}$.

\begin{defn}
Let $W_n \subset U$ be a sequence of connected open subsets of $U$ such that $0 \in W_n$ for all $n$. Let $\tilde{V}_n := \cap_{k \ge n} W_k$ and $V_n$ be the connected component of the interior of $\ti{V}_n$ containing $0 \in U$, that is $V_n = (\text{int}(\ti{V}_n))_{0}$. The \textit{kernel} of the sequence $\{W_n\}_{n \in \mbb{N}}$, denoted by $W$ is defined to be the $\cup_{n \ge 1} V_n$ if it is non-empty, otherwise we say it is $\{0\}$.

\no We will say that $W_n$ converges to $W$, if every subsequence of $\{W_n\}_{n \in \mbb{N}}$ has the same kernel $W$.
\end{defn}

\no Now we can define the kernel convergence of a sequence of domains.

\medskip

\begin{defn}
Let $W_n \subset U \subset \mbb{C}^N$ be a sequence of connected open subsets of $U$ such that $0 \in W_n$ for all $n$, and $W \subset U$ be another connected open set with $0 \in W$. We say that $W_n$ converges to $W$ in the kernel sense if $W$ is the kernel of the sequence $\{W_n\}_{n \in \mbb{N}}$ and $W_n$ converges to its kernel.
\end{defn}

\medskip

\no\textbf{Note:} We can check that if $W_n \to W$ in the kernel sense, then for any compact set $K \subset W$, there exist $N_K \in \mbb{N}$ such that $K \subset W_n$ for all $n \ge N_K$.

\medskip
The following example shows that even the pointwise convergence of $\eta_n$ to $\eta_W$ is not guaranteed on the whole set $W \sm E$, when $W_n \to W$ in the kernel sense.


\begin{exam}
Let $U = P(0,(5,5)) \subset \mbb{C}^2$ be the polydisc of polyradius $(r_1, r_2) = (5, 5)$ and centre $0 \in \mbb{C}^2$. Take $W_n = P(0,(1,1)) \cup P(0, (2, \frac{1}{n}))$ and $W = P(0,(1,1))$. Now consider the vector field $X(z) = z$ on $U$ and let $\cal{F}, \cal{F}_n, \cal{F}_{W}$ be the foliation induced by $X$ on the set $U, W_n, W$ respectively. Here, the singular set of the foliation is the singleton set $E = \{0\}$. Since $U$ is bounded, the foliations $\cal{F}, \cal{F}_n, \cal{F}_{W}$ are all hyperbolic. 

\no Observe that $P(0, (2, \frac{1}{n})) \searrow D(0,2) \times \{0\}$ as $n \to +\infty$, that is 
\[
\cap_{k \ge n} P(0, (2, \frac{1}{k})) = D(0,2) \times \{0\}.
\]
This tells us that the set $\tilde{V}_n = \cap_{k \ge n} W_n$ satisfies $\text{int}(\ti{V}_n) = P(0, (1,1)) = W$. Therefore, the kernel of the sequence $W_n$ is $W$. One can easily check that this is true for all subsequences of $\{W_n\}_{n \in \mbb{N}}$. Thus, $W_n \to W$ in the kernel sense. 

\medskip
\no Now take $p = (\frac{1}{2},0) \in U$, and observe that $p \in W_n , W$ for all $n \ge 1$. Denote the leaf of foliation $\cal{F}$ (or $\cal{F}_n, \cal{F}_{W}$) passing through point $p$ by $L_p$ (or $L_{p,n}, L_{p,W}$ respectively). One can easily check that $L_{p,n} = D(0, 2)\sm\{0\} \times \{0\} \cong D(0,2) \sm \{0\}$, and $L_{p,W} = D(0,1)\sm\{0\} \times \{0\} \cong \mbb{D}^{*}$. Since the Poincar\'e metric of the punctured disc $\mbb{D}^{*}$ is given by 
\[
ds^2 = \frac{4}{\vert q\vert^2 (\log{\vert q\vert^2)^2}} \vert dq\vert^2,
\]
and on $D(0,2)\sm \{0\}$ by
\[
ds^2 = \frac{4}{\vert q\vert^2 (\log{\frac{\vert q\vert^2}{4})^2}} \vert dq\vert^2.
\]
Therefore
\[
\eta_n^{2}(p) = \frac{(\frac{1}{2})^2 (\log{16})^2}{4}, \; \eta_{W}^{2}(p) = \frac{(\frac{1}{2})^2 (\log{4})^2}{4}.
\]
Thus, $\eta_n(p)$ doesn't converge to $\eta_{W}(p)$, although $W_n \to W$ in the kernel sense.

\end{exam}

\medskip
But all is not lost though. We will see that the pointwise convergence will be true away from a subset which we will call as \textit{defective set}. 

\begin{defn}
Consider $(W_n,W,U,\cal{F})$ be as in the above discussion. Now for a subsequence $\sigma = \{n_k\}_{k \in \mbb{N}}$ of the sequence $\{n\}_{n \in \mbb{N}}$, consider
\[
F_{\sigma} := \{p \in U \sm \ov{W} : \exists \{p_{n_k}\}_{k \in \mbb{N}} \subset U \ \text{such that} \ p_{n_k} \in W_{n_k} \sm \overline{W} \ \forall \ k, \ \text{and} \ p_{n_k} \to p\}.
\]
Let $F := \cup_{\sigma} F_{\sigma}$, where $\sigma$ varies over all possible subsequence. The set $S := \cup_{q \in F} (L_q \cap W) \subset W$ will be called as the \textit{defective set} corresponding to the foliation $\cal{F}$.  
\end{defn}

\no Note that $S \subset W$ is a $\cal{F}_{W}-$invariant set since it is union of leaves in $W$.

\medskip

\begin{thm}\label{T:PointConv}
Let $(W_n,W,U,\cal{F})$ be as above with the additional hypothesis that $W$ is a bounded taut domain. If $W_n \to W$ in the kernel sense, then $\eta_n(p) \to \eta_W(p)$ for all $p \in W \sm (S \cup E)$.
\end{thm}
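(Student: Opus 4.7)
For $\liminf_n \eta_n(p) \ge \eta_W(p)$ I will use the auxiliary sets $V_n$ from the definition of kernel. By the kernel-convergence hypothesis, $\{V_n\}$ is an increasing sequence of connected open subsets with $\bigcup_n V_n = W$, and for every $k \ge n$ one has $V_n \subset W_k$. The monotonicity of $\eta$ in the domain then gives $\eta_k(p) \ge \eta_{V_n}(p)$ whenever $p \in V_n$. Taking $\liminf$ in $k$ first and then letting $n \to \infty$, the pointwise convergence under monotone exhaustion---proved in \cite{NM} for discrete $E$ and extended in \cite{GV2} to general $E$---delivers $\eta_{V_n}(p) \to \eta_W(p)$, hence the claim.

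\textbf{Upper bound: lift to $\mbb D$ and normal family.} Pass to a subsequence realizing $L := \limsup_n \eta_n(p)$ and let $f_n := \pi_{p,n}: \mbb D \to L_{p,n}$ be the leafwise universal covers with $f_n(0)=p$ and $|f_n'(0)|=\eta_n(p)$. Since $\cal F$ is hyperbolic, the full leaf $L_p$ on $U$ is also hyperbolic, with a universal cover $\pi_p : \mbb D \to L_p$, $\pi_p(0)=p$. Because $f_n(\mbb D) \subset L_p$, each $f_n$ factors through as $f_n = \pi_p \circ \tilde f_n$ for a unique lift $\tilde f_n: \mbb D \to \mbb D$ with $\tilde f_n(0)=0$. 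The uniformly bounded family $\{\tilde f_n\}$ is normal by Montel, and the maximum principle forces any subsequential limit $\tilde f$ to map $\mbb D$ into $\mbb D$. Setting $f := \pi_p \circ \tilde f$, one obtains a holomorphic $f: \mbb D \to L_p$ with $f(0)=p$ and $|f'(0)|=L$.

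\textbf{Upper bound: locating the image.} The goal is $f(\mbb D) \subset L_{p,W}$, for then the extremality defining $\eta_W(p)$ gives $L = |f'(0)| \le \eta_W(p)$. First, I show $f(\mbb D) \subset \ov W$: if $f(z_0) \in U \sm \ov W$ for some $z_0$, then (since $U \sm \ov W$ is open) $f_{n_k}(z_0) \in W_{n_k} \sm \ov W$ eventually along the convergent subsequence, placing $f(z_0)$ in $F$; combined with $f(z_0) \in L_p$ this forces $L_p \cap F \ne \emptyset$, contradicting $p \notin S$. Second, to rule out $f$ hitting $\pa W$, the tautness of $W$ enters: the bounded-taut hypothesis yields completeness of the Kobayashi distance $d_W$, so if the connected component $A \subset \mbb D$ of $f^{-1}(W)$ containing $0$ were not all of $\mbb D$, pick $z_k \to z_0 \in (\pa A) \cap \mbb D$ from within $A$; the Kobayashi decreasing property gives $d_W(p, f(z_k)) \le d_{\mbb D}(0, z_k)$, which stays bounded, while $f(z_k) \to f(z_0) \in \pa W$ forces $d_W(p, f(z_k)) \to \infty$, a contradiction. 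Therefore $A = \mbb D$ and $f(\mbb D) \subset W$; since $f(\mbb D)$ is connected and passes through $p$, it lies in $L_{p,W}$.

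\textbf{Main obstacle.} The analytically most subtle step is the boundary-avoidance argument, where the taut hypothesis on $W$---via Kobayashi completeness of the bounded taut domain---is invoked to promote $f(\mbb D) \subset \ov W$ to $f(\mbb D) \subset W$. The defective-set step is precisely the structural reason $S$ was introduced, while the lift-and-extract portion is routine once one notes $\eta_U(p) < \infty$, a consequence of the hyperbolicity of $\cal F$. The overall strategy closely parallels the Hausdorff-convergence treatment in \cite{GV1, GV2}, with the essentially new ingredient being the bookkeeping around the defective set $S$, which isolates exactly those leaves where kernel convergence can fail to transmit to convergence of $\eta_n$.
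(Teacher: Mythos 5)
Your overall strategy matches the paper's (the paper also splits into a lower bound, handled by Proposition 1.5, and an upper-bound subsequential argument that uses the defective set to show the limit disc stays in $\ov W$ and then tautness to push it into $W$), but there is a genuine flaw in the crucial boundary-avoidance step and a less economical handling of the lower bound.

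For the lower bound you invoke the monotone-exhaustion convergence results of Lins Neto--Martins and \cite{GV2} applied to the exhaustion $V_n \nearrow W$. This is a legitimate alternative, but it imports those theorems' hypotheses (discreteness of $E$ in \cite{NM}, tautness/boundedness conditions in \cite{GV2}), whereas the paper's Proposition 1.5 gives the same inequality by a short direct argument (rescale the uniformizer of $L_{p,W}$ by $1-\varepsilon$ so its image is relatively compact in $W$, hence lies in some $V_N \subset W_k$) with no extra hypotheses at all. You should either reproduce that direct argument or verify carefully that the cited exhaustion results apply.

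The real problem is in the step promoting $f(\mbb D) \subset \ov W$ to $f(\mbb D) \subset W$. You assert that ``the bounded-taut hypothesis yields completeness of the Kobayashi distance $d_W$'': this is not a valid implication. Complete hyperbolicity implies tautness, not the other way around; a bounded taut domain need not be Kobayashi-complete. More seriously, the distance-decreasing inequality you write, $d_W(p, f(z_k)) \le d_{\mbb D}(0, z_k)$, requires $f$ to be a holomorphic map from $\mbb D$ into $W$, which is exactly what has not yet been established --- at this stage you only know $f : \mbb D \to \ov W$. Restricting $f$ to the connected component $A$ of $f^{-1}(W)$ containing $0$ yields only $d_W(p, f(z_k)) \le d_{A}(0, z_k)$, and $d_{A}(0, z_k) \to \infty$ as $z_k \to z_0 \in \pa A \cap \mbb D$ (the point $z_0$ is a finite boundary point of $A$), so the two quantities both blow up and there is no contradiction. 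As it stands, this part of your argument is circular. The paper handles this step with a terse appeal to the tautness of $W$ (and the fact that $\tilde\alpha(0) = p \in W$); you should either quote the precise tautness lemma being used there, or supply a correct argument (e.g.\ one exploiting that $f$ is a locally uniform limit of maps whose images eventually lie in any given compact exhaustion of $W$, or a maximum-principle argument with a plurisubharmonic exhaustion) rather than the Kobayashi-completeness route.

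The remaining steps --- normality of the uniformizers (you lift through the universal cover of $L_p$, the paper uses hyperbolicity of $L_p$ directly; both are fine), the exclusion of points of $U \sm \ov W$ from $f(\mbb D)$ via the definition of $F$ and $S$, and the final extremality comparison --- match the paper's argument.
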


\medskip
The following proposition will be used in proving the above theorem.

\begin{prop}
Let $(W_n,W,U,\cal{F})$ be as above. If $W$ is the kernel of the sequence $\{W_n\}_{n \in \mbb{N}}$, then 
\[
\liminf_{n \to +\infty}{\eta_n(p)} \ge \eta_{W}(p),
\]
for all $p \in W \sm E$.
\end{prop}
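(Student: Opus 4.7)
The strategy is to exploit the extremal property of the universal covering map together with the key feature of kernel convergence: compact subsets of $W$ are eventually contained in $W_n$. Since $\eta_W(p) = |\pi'_{p,W}(0)|_g$ where $\pi_{p,W} : \mbb D \to L_{p,W}$ is the universal cover with $\pi_{p,W}(0) = p$, I will produce, for each $r \in (0,1)$, an admissible competitor in the definition of $\eta_n(p)$ whose derivative at $0$ approximates $r \cdot \eta_W(p)$.

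Fix $p \in W \sm E$. For $r \in (0,1)$, consider the compact set $K_r := \pi_{p,W}(\ov{\mbb D_r}) \subset L_{p,W} \subset W$. By the note following the kernel-convergence definition (which really only requires $W$ to be the kernel, via $W = \cup_n V_n$ together with a standard compactness argument), there exists $N_r \in \mbb N$ such that $K_r \subset W_n$ for every $n \ge N_r$. Now define $f_r : \mbb D \ra M \sm E$ by $f_r(z) := \pi_{p,W}(rz)$; this is holomorphic, $f_r(0) = p$, and $|f_r'(0)|_g = r \, \eta_W(p)$.

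The next step is to check that $f_r \in \mathcal O(\mbb D, \cal F_n)$ for $n \ge N_r$, i.e., that $f_r(\mbb D) \subset L_{p,n}$. The image $f_r(\mbb D) = \pi_{p,W}(\mbb D_r)$ is connected, contains $p$, and lies in $L_{p,W} \cap W_n \subset L_p \cap W_n$ (using that $L_{p,W} \subset L_p$, the leaf of $\cal F$ on $U$ through $p$). Since $L_{p,n}$ is by definition the connected component of $L_p \cap W_n$ containing $p$, this connectedness gives $f_r(\mbb D) \subset L_{p,n}$, as required. Consequently, the defining supremum for $\eta_n(p)$ yields
\[
\eta_n(p) \;\ge\; |f_r'(0)|_g \;=\; r \, \eta_W(p) \qquad \text{for all } n \ge N_r.
\]
Taking $\liminf_{n \to \infty}$ gives $\liminf_n \eta_n(p) \ge r \, \eta_W(p)$ for every $r \in (0,1)$, and letting $r \nearrow 1$ completes the proof.

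The routine calculation is just the rescaling $f_r(z) = \pi_{p,W}(rz)$, and the only step that requires any care is verifying that $f_r$ actually takes values in the correct leaf $L_{p,n}$ of the restricted foliation; this is where the connectedness of $\pi_{p,W}(\mbb D_r)$ and the definition of $L_{p,n}$ as a connected component have to be invoked. The existence of $N_r$ with $K_r \subset W_n$ for $n \ge N_r$ is the essential use of the kernel hypothesis, and is the only point at which the structure of $W$ enters; notably, tautness or boundedness of $W$ is not needed here, which is consistent with the role the proposition will play as one-half of Theorem~\ref{T:PointConv}.
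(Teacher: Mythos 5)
Your proof is correct and is essentially the same as the paper's: both rescale the uniformizing map of $L_{p,W}$ by a factor $r=1-\ep<1$ so that its image is relatively compact in $W$, use the increasing exhaustion $W=\cup_n V_n$ from the kernel definition to place that image inside $W_k$ for large $k$, invoke connectedness to land in the component $L_{p,k}$, and then let $r\nearrow 1$. The only (minor) added value of your write-up is the explicit remark that the hypothesis that $W$ is the kernel suffices and that tautness or boundedness plays no role here, which the paper leaves implicit.
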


\begin{proof}
Let $p \in W \sm E$, and $\alpha : \mbb{D} \to L_{p,W}$ be the uniformization of the leaf $L_{p,W}$ of the foliation $\cal{F}_{W}$. For $\ep > 0$, consider the map $\psi : \mbb{D} \to \mbb{D}$ given by $\psi(z) = (1-\ep)z$. Now consider the map $\alpha \circ \psi : \mbb{D} \to L_{p,W}$ and observe that $\alpha \circ \psi(\mbb{D})$ is relatively compact in $W$. From the definition of kernel, we know $W = \cup_{n \ge 1} V_n$, where $V_n$ satisfies $V_k \subset V_{k+1}$ for all $k$. This tells us that there exist $N \in \mbb{N}$ such that $\alpha \circ \psi(\mbb{D}) \subset V_{N}$, which in turn gives us that $\alpha \circ \psi(\mbb{D}) \subset W_{k}$ for all $k \ge N$. Thus $\alpha \circ \psi(\mbb{D}) \subset L_{p} \cap W_{k}$ and $p \in \alpha \circ \psi(\mbb{D})$. Since $L_{p,k}$ is the connected component of $L_{p} \cap W_k$ containing $p$, therefore $\alpha \circ \psi(\mbb{D}) \subset L_{p,k}$ for all $k \ge N$. By definition of the map $\eta_{k}$, we get for $k \ge N$
\[
\eta_k(p) \ge \vert (\alpha \circ \psi)'(0)\vert = (1 - \ep) \eta_{W}(p).
\]
This gives us that $\liminf_{n \to +\infty}{\eta_n(p)} \ge \eta_{W}(p)$ for all $p \in W \sm E$.
\end{proof}

\no Now we shift our focus on the uniform convergence of $\eta_n$. First note that we can extend the map $\eta$ on the singular set $E$ by defining $\eta(p) = 0$ for all $p \in E$. Before we state our next result, lets quickly recall what we mean by a transversal type foliation. For a singular point $p \in E$ of a holomorphic foliation $\cal{F}$ on a complex manifold $M$, we consider the cone $C_{p}(\cal{F}) \subset T_{p}{M}$ which is the collection of all limit points of the vectors coming from the tangent line bundle of the foliation $T\cal{F} = \cup_{q \in M \sm E} T_{q}{\cal{F}}$ on $M \sm E$. Also, since $E \subset M$ is an analytic subset, we will denote the Whitney's $C_4$-tangent cone (see \cite{Ch}) at $p \in E$ by $C_{p}E$. We say that $\cal{F}$ is transversal type at $p \in E$, if for some neighourhood $U_p \subset M$ of $p$, $\ov{C_{q}{\cal{F}}} \cap C_{q}{E} = \{0\}$ holds for all $q \in E \cap U_p$. The foliation $\cal{F}$ is transversal type if it is transversal type at each $p \in E$. For a detailed definition, look at \cite{GV2}.

\begin{thm}\label{T:UnifConv}
Let $(W_n,W,U,\cal{F})$ be as before with the additional hypothesis that $W,U$ are bounded taut domains. If $W_n \to W$ in the kernel sense and $\eta_W$ is continuous on $W \sm E$, then $\eta_n \to \eta_W$ uniformly on the compacts of $W \sm (S \cup E)$. If in addition, $\cal{F}$ is transversal type, then $\eta_n \ra \eta_W$ uniformly on the compacts of $W \sm S$.
\end{thm}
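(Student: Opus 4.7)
The plan is to deduce uniform convergence from a sequential two-sided estimate: for any sequence $p_n \to p$ with $p_n, p \in W \sm (S \cup E)$, I will show $\eta_n(p_n) \to \eta_W(p)$. Combined with the hypothesis that $\eta_W$ is continuous on $W \sm E$, a standard contradiction argument on a compact $K \subset W \sm (S \cup E)$ upgrades this sequential statement to uniform convergence on $K$. I split the limit into
\[
\liminf_n \eta_n(p_n) \ge \eta_W(p) \qquad \text{and} \qquad \limsup_n \eta_n(p_n) \le \eta_W(p).
\]

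For the liminf bound I refine Proposition 1.6 by transporting the extremal disk from $p$ to nearby basepoints. Let $\pi_p : \mbb{D} \to L_{p,W}$ be the uniformizing map, and fix $\ep > 0$; the disk $\alpha(z) := \pi_p((1-\ep)z)$ has image relatively compact in $W$. Cover $\alpha(\mbb{D})$ by a finite chain of flow boxes of $\cal{F}$ disjoint from $E$; for $n$ large, $p_n$ lies in the first box, and concatenating the plaque identifications along the chain yields a holomorphic map $\alpha_n : \mbb{D} \to L_{p_n} \cap W$ with $\alpha_n(0) = p_n$ converging to $\alpha$ uniformly on compacts. The relative compactness of $\alpha_n(\mbb{D})$ together with kernel convergence gives $\alpha_n(\mbb{D}) \subset W_n$ for large $n$, and connectedness places the image inside $L_{p_n,n}$; hence $\eta_n(p_n) \ge |\alpha_n'(0)| \to (1-\ep)\eta_W(p)$. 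Sending $\ep \to 0$ yields the liminf estimate.

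For the limsup bound, let $f_n : \mbb{D} \to L_{p_n,n}$ be the universal covering with $f_n(0) = p_n$, so $|f_n'(0)| = \eta_n(p_n)$. Since $U$ is bounded and taut and $f_n(\mbb{D}) \subset W_n \subset U$, the family $\{f_n\}$ is normal; after extraction $f_n \to f : \mbb{D} \to U$ locally uniformly, with $|f'(0)| = \limsup_n \eta_n(p_n)$, and flow-box continuity at $p \notin E$ forces $f(\mbb{D}) \subset L_p$. I then show $f(\mbb{D}) \subset L_{p,W}$ in three moves. First, if some $f(z_0) \in U \sm \overline{W}$, then $f_n(z_0) \in W_n \sm \overline{W}$ eventually, placing $f(z_0) \in F$; since $L_{f(z_0)} = L_p$, this would put $p \in L_p \cap W \subset S$, contradicting $p \notin S$, so $f(\mbb{D}) \subset \overline{W}$. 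Second, to rule out $f$ touching $\partial W$, set $r^* := \sup\{r \in (0,1) : f(\overline{r\mbb{D}}) \subset W\}$ and assume $r^* < 1$. For $r < r^*$ the maps $f_r(z) := f(rz)$ are into $W$, with the fixed value $f_r(0) = p \in W$ preventing compact divergence, so by the tautness of $W$ the limit $f_{r^*}$ maps into $W$. Moreover, if some $z_0$ with $|z_0| = r^*$ had $f(z_0) \in \partial W$, then choosing $z_k \to z_0$ with $|z_k| < r^*$ gives $k_W(p, f(z_k)) \le k_{\mbb{D}}(0, z_k)$ bounded, contradicting the completeness of the Kobayashi metric $k_W$ as $f(z_k) \to \partial W$. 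Hence $f(\overline{r^*\mbb{D}}) \subset W$, contradicting $r^* < 1$, so $r^* = 1$ and $f(\mbb{D}) \subset W$. Third, $f(\mbb{D})$ is a connected subset of $L_p \cap W$ containing $p$ and so lies in $L_{p,W}$, whence $|f'(0)| \le \eta_W(p)$. The principal obstacle is the second move, preventing the normal-family limit $f$ from sticking to $\partial W$; this is where the tautness of $W$ and the completeness of $k_W$ are essential.

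For the transversal-type refinement, the results of \cite{GV2} supply continuous extensions of $\eta_U$ and $\eta_W$ to all of $U$ and $W$ respectively, with value $0$ on $E$. The preceding analysis covers the points of a compact $K \subset W \sm S$ that avoid $E$. For $p_n \to p$ with $p \in K \cap E$, we have $\eta_W(p) = 0$; the monotonicity $\eta_n(p_n) \le \eta_U(p_n)$ together with continuity of the extended $\eta_U$ forces $\eta_n(p_n) \to \eta_U(p) = 0 = \eta_W(p)$. Combined with the continuity of the extended $\eta_W$, the two-sided sequential estimate now holds on all of $K$, yielding uniform convergence on compacts of $W \sm S$.
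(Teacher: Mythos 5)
Your overall strategy — a contradiction argument on a compact $K$ driven by normal families of uniformizers — is the same as the paper's. The essential difference is that the paper shows directly that the limit $\ti\al$ of the uniformizers $\al_n$ is itself a covering map onto $L_{p,W}$, so that $\vert\ti\al'(0)\vert=\eta_W(p)$ exactly; both one-sided inequalities then come for free (the covering-map and no-puncture steps are deferred to \cite{GV1}). You instead use only the trivial bound $\vert f'(0)\vert\le\eta_W(p)$ for the $\limsup$, which forces you to prove a separate $\liminf$ statement. That $\liminf$ argument — moving the disc $\pi_p((1-\ep)z)$ to nearby base points by concatenating plaque identifications along a finite chain of flow boxes — is a legitimate device (the image is simply connected so the monodromy is trivial, and relative compactness in $W$ plus kernel convergence puts the transported disc inside $W_n$), but it needs more care than the one sentence you give it; in the paper's scheme this step is unnecessary.

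The serious gap is in your ``second move.'' You want to rule out $f$ hitting $\pa W$, and you write
$k_W\bigl(p,f(z_k)\bigr)\le k_{\mbb D}(0,z_k)$. The Schwarz--Pick inequality requires $f$ to map $\mbb D$ into $W$, which is precisely what you have not yet established; at this stage you only know $f\vert_{r^*\mbb D}:r^*\mbb D\to W$. The correct inequality is therefore $k_W\bigl(p,f(z_k)\bigr)\le k_{r^*\mbb D}(0,z_k)=\tanh^{-1}\bigl(\abs{z_k}/r^*\bigr)$, and since $\abs{z_k}\to r^*$ this right-hand side diverges, so no contradiction results. Moreover, even granting the bounded-distance estimate, you invoke ``completeness of the Kobayashi metric $k_W$,'' which is \emph{not} a consequence of $W$ being bounded and taut: taut does not imply complete hyperbolic in general. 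So the logic ``$k_W(p,f(z_k))$ bounded, $f(z_k)\to\pa W$, contradiction'' is not available under the stated hypotheses. The paper's corresponding step (its observation (a)) is compressed into ``the tautness of $W$ implies $\ti\al:\mbb D\to W$,'' also without elaboration, but regardless you cannot take the shortcut through Kobayashi completeness: you need a different mechanism (for instance, the fact established in Lemma~\ref{L:LimitUnif} that $\ti\al(\mbb D)\subset L\cup E$ for a single leaf $L$, combined with the structure of $L_{p,W}$ inside $W$, or a Hurwitz-type argument tied to the kernel convergence of $W_n$). Until this step is repaired, the $\limsup$ half of your argument — and hence the whole proof — does not go through.

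Two smaller remarks. In your ``first move'' you assert that ``flow-box continuity at $p\notin E$ forces $f(\mbb D)\subset L_p$''; this skips the possibility that $f$ meets $E$, which is exactly what Lemma~\ref{L:LimitUnif} is designed to handle (it yields $f(\mbb D)\subset L_p\cup E$ with $f^{-1}(E)$ discrete, and one must then separately rule out the punctures). Your transversal-type refinement is fine in outline and agrees with the paper's, though the paper argues directly with the lemma (limit must be the constant $p\in E$) rather than through a continuous extension of $\eta_U$.
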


\medskip

\no It should be noted that not all points in the defective set $S$ of a foliation $\cal{F}$ are $`$bad'. Given $(W_n, W, U, \cal{F})$, we will say that a point $p \in S$ is a \textit{removable defective point} if $\eta_n(p) \to \eta_W(p)$. We will denote the set of all such points by $\hat{S}$.

\begin{thm}\label{T:RemovableDefectiveSet}
Suppose $W_n, W \subset U$ be bounded connected open sets such that $0 \in W_n, W$ for all $n \in \mbb{N}$, $W$ is taut, and $W_n \to W$ in the kernel sense. Let $\cal{F}$ be a singular hyperbolic Riemann surface foliation on $U$, and $S \subset W$ be its defective set. If $p \in F$ satisfies
\[
\text{int}(L_{p} \cap F) = \emptyset,
\]
where $\text{int}(L_{p} \cap F)$ denotes the interior of the set $L_{p} \cap F$ when considered as a subset of the Riemann surface $L_{p}$, then
\begin{enumerate}
\item $\eta_{n}(q) \to \eta_{W}(q)$ for all $q \in L_{p} \cap W$, that is $L_{p} \cap W \subset \hat{S}$,
\item given additional hypothesis of $U$ being taut, and $\eta_{W}$ being continuous on $W \sm E$, the uniform convergence $\eta_n \to \eta_W$ will be true on the compacts of $(W \sm (S \cup E)) \cup (L_{p} \cap W)$.
\end{enumerate}

\end{thm}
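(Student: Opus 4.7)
By the preceding Proposition we already have $\liminf_n \eta_n(q) \geq \eta_W(q)$ for every $q \in W \sm E$, so part (1) reduces to showing $\limsup_n \eta_n(q) \leq \eta_W(q)$ on $L_p \cap W$. I would pass to a subsequence $\{n_k\}$ realising the $\limsup$, pick universal coverings $\pi_k \colon \mbb{D} \to L_{q, n_k} \subset W_{n_k}$ with $\pi_k(0) = q$ and $\vert \pi_k'(0)\vert_g = \eta_{n_k}(q)$, and use Montel's theorem (boundedness of the $W_{n_k}$) together with a flow-box continuation argument to extract a further subsequence with $\pi_k \to \pi$ locally uniformly, where $\pi \colon \mbb{D} \to \overline{U}$ is holomorphic, satisfies $\pi(0) = q$ and $\vert \pi'(0)\vert_g = \limsup_n \eta_n(q)$, and takes values in $L_p$ outside the discrete set $\pi^{-1}(E)$.

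The hypothesis $\mathrm{int}(L_p \cap F) = \emptyset$ enters crucially to establish $\pi(\mbb{D}) \subset L_p \cap \overline{W}$. Assume $\pi$ non-constant, so by the open mapping theorem $\pi(\mbb{D})$ is open in $L_p$. If some $z_0 \in \mbb{D}$ had $\pi(z_0) \in U \sm \overline{W}$, then for large $k$ we would have $\pi_k(z_0) \in W_{n_k} \sm \overline{W}$ (since $U \sm \overline{W}$ is open), forcing $\pi(z_0) \in F$ by the definition of $F$; a neighbourhood $V$ of $z_0$ with $\pi(V) \subset U \sm \overline{W}$ would then produce a non-empty open subset of $L_p \cap F$, contradicting the hypothesis. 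To upgrade $\pi(\mbb{D}) \subset L_p \cap \overline{W}$ to $\pi(\mbb{D}) \subset L_p \cap W$, I would combine tautness of $W$ with completeness of the Poincar\'e metric on the hyperbolic leaf $L_{q,W}$: a distance-decreasing / removable-singularity argument applied to the restriction of $\pi$ to $\mbb{D} \sm \pi^{-1}(\pa W)$ (a map into the complete hyperbolic $L_{q,W}$) forces $\pi^{-1}(\pa W) = \emptyset$, since otherwise $\pi(z)$ would approach $\pa L_{q,W}$ along some interior sequence while $d_{\mbb{D}}^P(0, z)$ stays bounded. Connectedness then gives $\pi(\mbb{D}) \subset L_{q,W}$, and the extremal property of $\eta_W$ yields $\vert \pi'(0)\vert_g \leq \eta_W(q)$, completing part (1).

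For part (2), I would combine Theorem~\ref{T:UnifConv} (uniform convergence on compacts of $W \sm (S \cup E)$) with (1) via the standard subsequence-contradiction argument: if uniform convergence failed on a compact $K \subset (W \sm (S \cup E)) \cup (L_p \cap W)$, one extracts $q_{n_k} \to q^{*} \in K$ with $\vert \eta_{n_k}(q_{n_k}) - \eta_W(q_{n_k})\vert \geq \delta$; continuity of $\eta_W$ reduces the task to $\eta_{n_k}(q_{n_k}) \to \eta_W(q^{*})$. The case $q^{*} \in W \sm (S \cup E)$ is covered by Theorem~\ref{T:UnifConv} on a neighbourhood of $q^{*}$; the case $q^{*} \in L_p \cap W$ requires moving-basepoint analogues of the preceding Proposition (lower bound) and of the argument of (1) (upper bound). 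The essential point is that $\mathrm{int}(L_p \cap F) = \emptyset$ is a property of the \emph{limit} leaf $L_p$, so the confinement step for the Montel limit disk goes through irrespective of whether the $q_{n_k}$ sit on $L_p$ itself or on nearby leaves (which, being disjoint from $F$, satisfy the hypothesis trivially).

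I expect the decisive technical difficulty to be the step $\pi(\mbb{D}) \subset \overline{W} \Rightarrow \pi(\mbb{D}) \subset L_p \cap W$ in part (1): tautness of $W$ alone does not prevent an analytic disk in $\overline{W}$ centred in $W$ from touching $\pa W$, so one must genuinely exploit both the completeness of the Poincar\'e metric on $L_{q,W}$ and the finer structure of $\pi$ as a Montel limit of universal coverings. A secondary subtlety in part (2) is verifying that the moving-basepoint limit disk still lands on the correct leaf $L_p$ at non-singular points, which relies on the continuity of the foliation in flow-box coordinates near $q^{*}$.
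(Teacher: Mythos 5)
Your proposal follows the paper's strategy quite closely: pass to a subsequence, take Montel limits of uniformizers, use $\text{int}(L_p \cap F)=\emptyset$ to confine the limit disc to $\overline{W}$, then use tautness and the extremal property of $\eta_W$; and for part~(2), the subsequence-contradiction scheme combined with Theorem~\ref{T:UnifConv}. So the skeleton is right. Two remarks on the details, one of which is a genuine oversight.

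First, you flag the step $\pi(\mbb{D}) \subset \overline{W} \Rightarrow \pi(\mbb{D}) \subset W$ as ``the decisive technical difficulty'' and claim that tautness alone does not prevent a disc in $\overline{W}$ centred in $W$ from touching $\partial W$; you then sketch an alternative via completeness of the leafwise Poincar\'e metric. This worry is misplaced. The paper treats this as a direct consequence of tautness (exactly as in the proof of Theorem~\ref{T:PointConv}: ``$\alpha(\mbb{D}) \subset \overline{W}$, but $W$ is taut so $\alpha(\mbb{D}) \subset W$''), and indeed it is a standard fact, already used and established in \cite{GV1}: for a bounded taut (hence pseudoconvex) domain $W$, a holomorphic disc contained in $\overline{W}$ that meets $W$ lies entirely in $W$. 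Your proposed replacement is both unnecessary and not clearly correct as stated — $\pi^{-1}(\partial W)$ need not consist of isolated points, $\mbb{D} \sm \pi^{-1}(\partial W)$ need not be connected, and the leaf component into which the restricted map lands is not automatically $L_{q,W}$, so the ``removable singularity / completeness'' argument would require substantial work to make rigorous, whereas the paper's route does not.

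Second, in the confinement step you write that if $\pi(z_0) \in U \sm \overline{W}$ then a neighbourhood $V$ of $z_0$ with $\pi(V) \subset U \sm \overline{W}$ ``would produce a non-empty open subset of $L_p \cap F$.'' This needs two additional observations that the paper supplies and you elide. (i) To conclude $\pi(V) \subset F$ (not merely $\pi(z_0)\in F$) you must verify that \emph{every} point of $\pi(V)$ is a limit of points in $W_{n_k}\sm\overline W$; the paper does this by choosing $\epsilon>0$ with $\overline{(U_{\tilde q})_\epsilon} \subset U\sm\overline W$ and using uniform convergence of $\alpha_{n_k}$ on $\overline{D(z_0,r)}$ to get $\alpha_{n_k}(D(z_0,r))\subset (U_{\tilde q})_\epsilon$ for large $k$. (ii) To assert $\pi(V)$ is open \emph{in $L_p$} you must first rule out the possibility that $\pi(V)$ meets $E$; by Lemma~\ref{L:LimitUnif}, $\pi(\mbb{D})\subset L_p\cup E$ with $\pi^{-1}(E)$ discrete when $\pi$ is nonconstant, and one shrinks the disc accordingly (the paper does this explicitly in the proof of part~(2), choosing $\tilde r<r$ with $\alpha(D(z_0,\tilde r)\sm\{z_0\})\subset L_p$). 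Both points are easily fixed, but as written your argument for the crucial inclusion $\pi(\mbb{D})\subset\overline W$ is incomplete.

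Your sketch of part~(2) is structurally correct — in particular the observation that $\text{int}(L_p\cap F)=\emptyset$ is a property of the limit leaf and hence applies even when the basepoints $q_n$ lie off $L_p$ is exactly the right one — though it would need the same two patches and, as the paper notes, the tautness of $U$ to extract the Montel limit in the first place.
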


\medskip

\no What is left now is to show that the results proved in \cite{GV1} (Theorem 1.1) and \cite{GV2} (Theorem 2.1) are special cases of Theorem \ref{T:PointConv} and Theorem \ref{T:UnifConv}. To do that lets first recall the type of domain convergence considered in \cite{GV1}, \cite{GV2}. Let $(M,g)$ be a complex hermitian manifold and let $d$ be the distance on $M$ induced by $g$. For 
$S \subset M$ and $\ep > 0$, let $S_{\ep}$ be the $\ep$-thickening of $S$ with distances being measured using $d$. Recall that the Hausdorff distance $\mathcal H(A, B)$ between compact sets $A, B \subset M$ is the infimum of all $\ep >0$ such that $A \subset B_{\ep}$ and $B \subset A_{\ep}$. For bounded domains $U, V \subset M$, the prescription $\rho(U, V) = \mathcal H(\ov U, \ov V) + \mathcal H(\pa U, \pa V)$ defines a metric (see \cite{Bo}) on the collection of all  bounded open subsets of $X$ with the property that if $\rho(U, U_n) \ra 0$, then every compact subset of $U$ is eventually contained in $U_n$, and every neighbourhood of $\ov U$ contains all the $U_n$'s eventually. We will say that $U_n \rightarrow U$ in the Hausdorff sense if $\rho(U_n, U) \to 0$.

\no In the following result $M = \mbb{C}^N$, and $g$ is the Euclidean metric.

\begin{thm}\label{T:Hausdorff-Kernel}
Let $W_n,W$ be bounded connected domains in $\mbb{C}^N$ such that $0 \in W_n, W$ for all $n \in \mbb{N}$. If $W_n \to W$ in the Hausdorff sense, then $W = \text{ker}(W_n)$ and $W_n \to W$ in the kernel sense with $F = \emptyset$.
\end{thm}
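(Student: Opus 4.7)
\medskip
\noindent\textbf{Proof plan.} The strategy is to unpack three easy consequences of the convergence $\rho(W_n, W) \to 0$ and verify the three assertions separately. First I record: (a) every compact $K \subset W$ lies in $W_n$ for all large $n$; (b) every open neighbourhood of $\ov{W}$ contains $W_n$ for all large $n$; and (c) from $\mathcal H(\pa W_n, \pa W) \to 0$, every $p \in \pa W$ is a limit of points $q_n \in \pa W_n$. The claim $F = \emptyset$ is then immediate from (b): if $p_{n_k} \in W_{n_k} \sm \ov{W}$ with $p_{n_k} \to p \notin \ov{W}$, choose $\ep > 0$ with $B(p, \ep) \cap \ov{W} = \emptyset$; then $\ov{W}_{\ep/2}$ is an open neighbourhood of $\ov{W}$ disjoint from $B(p, \ep/2)$, so (b) forces $p_{n_k} \in \ov{W}_{\ep/2}$ eventually, contradicting $p_{n_k} \to p$.

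Next I verify $W = \text{ker}\{W_n\}$ by both inclusions. For $W \subseteq \text{ker}\{W_n\}$, fix $p \in W$, join $0$ to $p$ by a continuous path $\ga$ in $W$, and thicken $\ga$ to a connected open set $\Om$ with $\ov{\Om}$ compact and $\ov{\Om} \subset W$ (e.g.\ the $\de/2$-neighbourhood of $\ga$ with $\de = d(\ga, \pa W) > 0$, which is open, connected as a thickening of a connected set, and has compact closure in $W$). Applying (a) to $\ov{\Om}$ gives $\ov{\Om} \subset W_k$ for all $k \ge N$, so $\Om \subset \ti{V}_N$; since $\Om$ is open, connected, and contains $0$, we have $\Om \subset V_N$, whence $p \in V_N \subset \text{ker}\{W_n\}$. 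Conversely, let $p \in V_N$ for some $N$. If $p \notin \ov{W}$, the $F = \emptyset$ argument applied to the constant sequence $p$ gives $p \notin W_n$ for large $n$, contradicting $V_N \subset W_n$ for $n \ge N$; hence $V_N \subset \ov{W}$. If moreover $p \in V_N \cap \pa W$, then (c) produces $q_n \in \pa W_n$ with $q_n \to p$, and openness of $V_N$ forces $q_n \in V_N \subset W_n$ for large $n$, contradicting $q_n \in \pa W_n$. Therefore $V_N \subset W$, and $\text{ker}\{W_n\} = W$.

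Finally, every subsequence $\{W_{n_k}\}$ again satisfies $\rho(W_{n_k}, W) \to 0$, so the same proof yields $\text{ker}\{W_{n_k}\} = W$, giving kernel convergence. The only step requiring any real thought is the construction of the connected open thickening $\Om$ inside $W$, which I view as the chief (and quite mild) obstacle --- once it is in place, the rest is a direct translation between the Hausdorff and kernel definitions, with (b) handling points outside $\ov{W}$ and the boundary clause (c) handling points on $\pa W$.
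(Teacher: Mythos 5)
Your proof is correct and follows essentially the same three-step structure as the paper's: both inclusions for $W = \mathrm{ker}(W_n)$ via a connected relatively compact neighbourhood of a path from $0$ to $p$ (you make explicit the construction via a tubular thickening of a path, which the paper merely asserts exists), the boundary case handled via $\mathcal{H}(\pa W_n, \pa W) \to 0$, subsequence stability of Hausdorff convergence for kernel convergence, and emptiness of $F$ from the fact that Hausdorff convergence eventually traps the $W_n$ inside any neighbourhood of $\ov{W}$.
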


\section{Proof of Theorem \ref{T:PointConv}}
Having Proposition 1.5 at our disposal, it suffices to prove that for $p \in W \sm (S \cup E)$ and for some convergent subsequence $\{\eta_{n_k}(p)\}_{k \in \mbb{N}}$, it satisfies
\[
\lim_{k \to +\infty}{\eta_{n_k}(p)} \le \eta_{W}(p).
\]
Fix $p \in W \sm (S \cup E)$. Since $\eta_n(p) \le \eta(p) < +\infty$ (here $\eta$ corresponds to the set $U$), therefore we can choose a subsequence $\{\eta_{n_k}(p)\}_{k \in \mbb{N}}$ which converges to $M$. Consider the corresponding uniformizers $\alpha_{n_k} : \mbb{D} \to L_{p, n_k}$ with $\alpha_{n_k}(0) = p$. Since $\alpha_{n_k}(\mbb{D}) \subset L_{p, n_k} \subset L_{p}$, $\alpha_{n_k}(0) = p$ and $L_p$ being hyperbolic, upto a further subsequence we can suppose that $\alpha_{n_k}$ converges to some holomorphic map $\alpha : \mbb{D} \to L_p$, with $\alpha(0) = p$.

Now we just need to show that $\alpha(\mbb{D}) \subset L_{p, W}$. Suppose there exist a point $q \in \alpha(\mbb{D})$ such that $q \in U \sm \ov{W}$. Let $\alpha(z_0) = q$. Choose a neighourhood $U_{q} \subset U$ of $q$ such that $\ov{U}_{q} \cap \ov{W} = \emptyset$. Take $q_{n_k} := \alpha_{n_k}(z_0)$, since $\alpha_{n_k} \to \alpha$, we have $q_{n_k} \to q$. Therefore there exist $N \in \mbb{N}$ such that $q_{n_k} \in U_{q}$ for all $k \ge N$.
Also $q_{n_k} \in \alpha_{n_k}(\mbb{D}) = L_{p, n_k} \subset W_{n_k}$ for all $k$. Therefore for $k \ge N$
\[
q_{n_k} \in W_{n_k} \sm \ov{W},
\]
and $q_{n_k} \to q$. Thus $q \in F_{\sigma} \subset F$, and since $q \in L_{p}$, therefore $p \in S$, which is a contradiction since $p \in W \sm (S \cup E)$. 

Hence $\alpha(\mbb{D}) \subset \ov{W}$, but $W$ is taut so $\alpha(\mbb{D}) \subset W$. Since $\alpha(\mbb{D}) \subset L_{p}$ is connected, therefore $\alpha(\mbb{D}) \subset L_{p, W}$. Now $\vert \alpha'(0)\vert = \lim_{k \to +\infty}{\vert \alpha_{n_k}'(0)\vert} = \lim_{k \to +\infty}{\eta_{n_k}(p)}$, and by definition of $\eta_{W}$,
\[
\eta_{W}(p) \ge \vert \alpha'(0)\vert.
\]
So we have our result.

\section{Proof of Theorem \ref{T:UnifConv}}

Suppose that there is a compact set $K \subset W \sm (S \cup E)$ and a sequence $\{p_n\} \in K$ such that
\[
\vert \eta_{n}(p_n) - \eta_W(p_n) \vert > \ep
\]
for some $\ep > 0$. 
Assume that $p_n \ra p \in W \sm (S \cup E)$ after passing to a subsequence. 

\medskip

By the continuity of $\eta_W$, $\vert \eta_W(p_n) - \eta_W(p) \vert < \ep/2$
and hence
\begin{equation}\label{E:Eq1}
\vert \eta_{n}(p_n) - \eta_W(p) \vert > \ep/2
\end{equation}
for large $n$. Let $\al_n : \mbb D \ra L_{p_n, W_n} \subset W_n$ be a uniformization map with $\al_{n}(0) = p_n$. Since $W_n,W \subset U$ for all $n$ and $U$ is taut, hence the family $\{\al_n\}$ is normal. By passing to a subsequence, let $\ti \al : \mbb D \ra \ov U$ be a holomorphic limit with $\ti \al(0) = p$. To complete the proof, it suffices to show that $\ti \al : \mbb D \ra L_{p, W}$ is a uniformization. Indeed, if this were the case, then $\eta_W(p) = \vert \ti \al'(0) \vert$ which would then imply that
\[
\eta_{n}(p_n) = \vert \al'_n(0) \vert \ra \vert \ti \al'(0) \vert = \eta_W(p)
\]
and this contradicts (\ref{E:Eq1}). That $\ti{\al}$ uniformizes the leaf $L_{p,W}$ is a consequence of the following observations:

\medskip 

\no (a) $\ti{\alpha}$ satisfies $\ti{\alpha}(\mbb{D}) \subset W$.

\no (b) $\ti{\alpha}$ satisfies $\ti{\alpha}(\mbb{D}) \subset L_{p,W} \cup E$.

\no (c) The map $\ti \al : \mbb D \sm \ti \al^{-1}(E) \ra L_{p,W}$ is a covering map.

\no (d) $\ti{\al}^{-1}(E) = \emptyset$, and $\ti{\al}(\mbb{D}) \subset L_{p,W}$.

\medskip
\no We will only prove the observations $(a)$ and $(b)$, since the proof of other two are almost identical to the similar observations made in \cite{GV1} (see proof of Theorem 1.1).

\medskip

\no (a) Since $U$ is taut and $\ti \al(0) = p \in U$, it follows that $\ti \al (\mbb D) \subset U$. Now suppose that $\ti \al(a) \in U \sm \ov W$ for some $a \in \mbb D$. Then $\al_n(a) \in U \sm \ov W$ for all large $n$, and therefore $\al_n(a) \in W_n \sm \ov W$ which gives us that $\ti \al(a) \in F$. Now since $\ti \al(a) \in L_{p}$, therefore $p \in S$ which is a contradiction to the fact that $p \in W \sm (S \cup E)$. Hence $\ti \al : \mbb D \ra \ov W$. Again, the tautness of $W$ implies that $\ti \al : \mbb D \ra W$.

\medskip

\no (b) To see that $\ti \al(\mbb D) \subset L_{p, W} \cup E$, we will use the following lemma. 

\begin{lem}\label{L:LimitUnif}
Let $\mathcal{F}$ be a hyperbolic SHFC on a complex manifold $M$, with singular set $E \subset M$. If $\{\alpha_n\}_{n \in \mbb{N}}$ is a sequence in $\mathcal{O}(\mbb{D}, \cal{F})$, which converges in the compact parts of $\mbb{D}$ to some $\alpha : \mbb{D} \to M$, then $\alpha(\mbb{D})\subset L \cup E$, where either $L = \emptyset$ or $L$ is a leaf of $\mathcal{F}$. In particular, if $L = \emptyset$, $q \in \alpha(\mbb{D}) \cap E$, and $\mathcal{F}$ is transversal type at $q \in E$, and then $\alpha \equiv q \in E$.
\end{lem}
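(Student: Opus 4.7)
The plan is to separate the image of $\alpha$ into its part meeting $M \sm E$, where the flow-box structure of $\cal{F}$ is available, and its part in $E$, then glue the two using the analyticity of $E$. If $\alpha(\mbb{D}) \subset E$, the first assertion holds with $L = \emptyset$. Otherwise, since $E$ is analytic in $M$ and $\alpha$ is holomorphic, $\alpha^{-1}(E)$ is a proper analytic subset of the one-dimensional disc $\mbb{D}$, hence discrete; in particular $D_{0} := \mbb{D} \sm \alpha^{-1}(E)$ is open, connected, and maps into $M \sm E$.

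I will then fix $z_{0} \in D_{0}$, set $p_{0} := \alpha(z_{0})$, and choose a flow box $(V, \phi)$ around $p_{0}$ with $\phi : V \ra \mbb{D} \times \mbb{D}^{N-1}$ so that the plaques of $\cal{F}$ inside $V$ are the slices $\{y = \text{const}\}$. Picking $r > 0$ small enough that $\alpha(\ov{D(z_{0}, r)}) \subset V$, uniform convergence on compact sets gives $\alpha_{n}(\ov{D(z_{0}, r)}) \subset V$ for all large $n$. Writing $\phi \circ \alpha_{n} = (P_{n}, Q_{n})$, each $\alpha_{n}$ sends the connected disc $D(z_{0}, r)$ into a single plaque (since its full image lies in one leaf), so $Q_{n}$ is constant on $D(z_{0}, r)$. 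Taking limits, the $y$-component of $\phi \circ \alpha$ is also constant there, whence $\alpha$ locally lands in the plaque through $p_{0}$ and hence in $L_{p_{0}}$. This makes $z \mapsto L_{\alpha(z)}$ locally constant on $D_{0}$; connectedness of $D_{0}$ then pins down a single leaf $L$ with $\alpha(D_{0}) \subset L$, yielding $\alpha(\mbb{D}) \subset L \cup E$.

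For the second assertion, assume $L = \emptyset$ so $\alpha(\mbb{D}) \subset E$, and fix $q \in \alpha(\mbb{D})$ at which $\cal{F}$ is transversal type. Let $U_{q}$ be a neighbourhood of $q$ on which $\ov{C_{q'} \cal{F}} \cap C_{q'} E = \{0\}$ for every $q' \in E \cap U_{q}$, and set $\Omega := \alpha^{-1}(U_{q})$, which is open in $\mbb{D}$. For any $z \in \Omega$, applying the Whitney-cone characterization via limits of secants to the holomorphic map $\alpha$ (using $\alpha(z+h) \in E$) gives $\alpha'(z) \in C_{\alpha(z)} E$. On the other hand, $\alpha'(z) = \lim_{n} \alpha_{n}'(z)$ with $\alpha_{n}'(z) \in T_{\alpha_{n}(z)} \cal{F}$ and $\alpha_{n}(z) \to \alpha(z) \in E$, placing $\alpha'(z) \in \ov{C_{\alpha(z)} \cal{F}}$. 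The transversal hypothesis then forces $\alpha'(z) = 0$ throughout $\Omega$, and the identity theorem on $\mbb{D}$ yields $\alpha \equiv q$.

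The step I expect to be most delicate is the tangent-cone bookkeeping in the transversal-type half: the two inclusions $\alpha'(z) \in C_{\alpha(z)} E$ and $\alpha'(z) \in \ov{C_{\alpha(z)} \cal{F}}$ both need to be read off cleanly from the definitions in \cite{Ch} and \cite{GV2}, with some care because the base point $\alpha(z)$ itself may be moving within the singular locus $E$.
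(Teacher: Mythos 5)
Your proof is correct and follows essentially the same route as the paper's: case on whether $\alpha(\mbb D)\subset E$, discreteness of $\alpha^{-1}(E)$ via the identity principle, flow-box local constancy of the plaque, and connectedness to pin down a single leaf, then the tangent-cone argument for the transversal-type case. The only difference of note is a small polish in the flow-box step: you observe directly that $\alpha_n$ maps a small disc into a single plaque (as plaques are the connected components of $L_n\cap V$) so $Q_n$ is constant and the limit is constant, whereas the paper argues by contradiction (if $y\circ\alpha$ were non-constant, its derivative would be nonzero somewhere, forcing $d(y\circ\alpha_n)\neq 0$ for large $n$, contradicting $\alpha_n\in\mathcal O(\mbb D,\mathcal F)$); both work. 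Your treatment of the cone inclusions $\alpha'(z)\in C_{\alpha(z)}E$ and $\alpha'(z)\in\ov{C_{\alpha(z)}\mathcal F}$ at the moving base point is also cleaner than the paper's, which appears to have a small typographical slip writing $C_{\alpha(z_0)}\mathcal F$ where $C_{\alpha(z_1)}\mathcal F$ is meant.
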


\begin{proof}
If $\al$ is constant or $\al(\mbb{D}) \subset E$, then we are done. 

Now suppose $\al$ is non-constant, and $p \in \al(\mbb{D}) \cap (M \sm E)$. Consider the open set $N := \{z \in \mbb{D} : \alpha(z) \notin E\} \neq \emptyset$. Define for $w_0 \in N$, the set $N_{w_0} = \{z \in N : L_{z} = L_{w_0}\}$. To see that $N_{w_0}$ is open in $\mbb D$, pick $w_1 \in N_{w_0}$ and a chart $ \phi = (x, y) : \ti W \ra \mbb D \times \mbb D^{n-1}$ around $\ti \al(w_1) = q$ such that $(x, y)(q) = 0 \in \mbb C^n$ and the leaves of $\mathcal F_{\ti W}$ are described by $y = \mbox{constant}$. Let $V$ be the connected component of $\al^{-1}(\ti W) \subset \mbb D$ containing $w_1$. Now, the claim is that $y(\al(V)) = 0$. If this is not the case, then $y(\al (V)) \not= \{0\}$. Since $y(\al(w_1)) = 0$, it follows that $y \circ \al : V \ra \mbb D^{n-1}$ is a non-constant map which implies that its derivative $d(y \circ \al)(w_2) \not= 0$ for some $w_2 \in V$ near $w_1$. Since $\al_n$ converges to $\al$, $d(y \circ \al_n)(w_2) \not= 0$ for $n$ large. This means that $\al_n(\mbb D)$ is not contained in a leaf of $\mathcal F$. This contradiction shows that $y(\al(V)) = 0$. Hence every $w \in V$ has the property that the leaf of $\mathcal F_{\ti W}$ containing $\al(w)$ coincides with $y = 0$ which is also the leaf passing through $w_1$. This shows that $N_{w_0}$ is open.

Now let $F = \alpha^{-1}(E)$. It suffices to show that if $F$ is non-empty, then $F$ is a discrete set. Suppose $F$ is not discrete, and $z \in D$ is a limit point of $F$. Since $F$ is closed, $z \in F$. Consider the point $\alpha(z) = p \in E$. Since $E$ is an analytic subset of $M$, let $f_1, f_2, \ldots, f_m$ be the local defining functions for $E$ near the point $p$, i.e., there exist a neighourhood $V \subset M$ of $p$ such that $V \cap E = \{f_1 = f_2 = \ldots = f_m = 0\}$. Let $r >0$ be such that $\alpha(D(z, r)) \subset V$. Consider the holomorphic functions $f_1 \circ \alpha\vert_{D(z,r)}, f_2 \circ \alpha\vert_{D(z,r)}, \ldots, f_m \circ \alpha\vert_{D(z,r)}$ defined on the disc $D(z,r)$. Since $z$ is an accumulation point of the zero set $\mathcal{Z}(f_i \circ \alpha\vert_{D(z,r)})$ in $D(z,r)$, for all $1 \le i \le m$. Therefore by Identity principle, $f_i \circ \alpha\vert_{D(z,r)} \equiv 0$ on $D(z,r)$ for all $1 \le i \le m$, which in turn gives us that $D(z,r) \subset F$. Now if $F \neq \mbb{D}$, then there exist a point $\tilde{z} \in \partial (\mbb{D} \sm F)$ which is an accumulation point of $F$ (since $F$ is not discrete). But by above argument $\tilde{z}$ belongs to the interior of $F$, which is a contradiction. Thus $F = \mbb{D}$, that is $\alpha(\mbb{D}) \subset E$ which is a contradiction to the assumption that $p \in \al(\mbb{D}) \cap (M \sm E)$. Therefore $F$ is a discrete set.

\medskip
Now if $L = \emptyset$, and consider $q \in \alpha(\mbb{D}) \cap E$ such that $\mathcal{F}$ is transversal type at $q$. We need to show that $\alpha$ is a constant map. Suppose not. Let $z_0 \in \mbb{D}$, $r > 0$ be such that $\alpha(z_0) = q$ and $\alpha(D(z_0,r)) \subset U_{q}$. Since $\alpha\vert_{D(z_0, r)} : D(z_0, r) \rightarrow U_q \cap E$ is non-constant, take $z_1 \in D(z_0,r)$ be such that $\alpha'(z_1) \neq 0$, and since $\alpha_{n}'(z_1) \rightarrow \alpha'(z_1)$, therefore $\alpha'(z_1) \in C_{\alpha(z_0)}\mathcal{F}$. Also since $\alpha(\mbb{D}) \subset E$, therefore $\alpha'(z_1) \in C_{\alpha(z_1)}E$, which is a contradiction to the transversality property of the foliation $\cal{F}$ in the neighourhood $U_{q}$. Therefore, $\alpha \equiv q \in E$. 
\end{proof}

\medskip
\no Since each $\alpha_n \in \cal{O}(\mbb{D}, \cal{F})$, therefore by above lemma $\ti \al : \mbb{D} \to L_{p} \cup E$. From $(a)$, we already saw that $\ti \al(\mbb{D}) \subset W$. Since $p \in \ti \al(\mbb{D})$ and $\mbb{D} \sm (\ti \al)^{-1}(E)$ is connected, therefore $\ti \al : \mbb{D} \to L_{p,W} \cup E$.

\medskip 

Now if $\mathcal{F}$ is of transversal type, in addition to the above hypothesis. To show that the convergence is uniform on compacts of $W \sm S$, it suffices to prove that for $p \in E$, and $p_n \in W \sm (S \cup E)$ such that $p_n \to p$, 
\[
\eta_{n}(p_n) \to \eta_{W}(p) = 0.
\]

\no Suppose not, then $\vert\eta_{n}(p_n)\vert > \ep$. Consider the uniformization map $\alpha_n$ of the leaf $L_{p_n, W_n}$ with $\alpha_n(0) = p_n$. Therefore after passing to a subsequence we get that $\{\alpha_n\}_{n \in \mbb{N}}$ converges uniformly on the compacts of $\mbb{D}$ to a map $\tilde{\alpha} : \mbb{D} \rightarrow W$ with $\tilde{\alpha}(0) = p \in E$. Since $\al_n \in \cal{O}(\mbb{D}, \cal{F})$ for all $n$, therefore by Lemma \ref{L:LimitUnif}, $\ti \al$ is constant and $\ti \al \equiv p$. Thus $\eta_n(p_n) \to 0$ which is a contradiction.

\no Thus, $\eta_{n}$ converges to $\eta_{W}$ uniformly on compact subsets of $W \sm S$.

\section{Proof of Theorem \ref{T:RemovableDefectiveSet}}

\no \textbf{(1)} We need to show $\eta_{n}(q) \to \eta_{W}(q)$ holds true for all $q \in L_{p} \cap W$. To see this, suppose there exist $q \in L_{p} \cap W$ such that $\eta_{n}(q) \not\to \eta_{W}(q)$. Upto a subsequence, we can suppose that $\eta_{n}(q) \to R \neq \eta_{W}(q)$, for some constant $R > 0$. Let $\alpha_{n} : \mbb{D} \to L_{q,n}$ be the uniformizing map such that $\alpha_n(0) = q$. Upto further subsequence, we can suppose that $\alpha_{n_k} \to \alpha : \mbb{D} \to L_{q}$ with $\alpha(0) = q$. Since $W$ is a bounded taut domain, it suffices to show that $\alpha(\mbb{D}) \subset \ov{W}$ to get a contradiction to the assumption $\eta_{n_k}(q) \not\to \eta_{W}(q)$. Now suppose if possible there exist $\ti{q} = \alpha(z_0) \in U \sm \ov{W}$. Take a neighourhood $U_{\ti{q}}$ of $\ti{q}$ such that $\ov{U_{\ti{q}}} \subset U \sm \ov{W}$. Let $r > 0$ be such that $\alpha(D(z_0, r)) \subset U_{\ti{q}}$. Observe that $\alpha(D(z_0, r))$ is connected and since $\alpha$ is not a constant map, therefore $\alpha(D(z_0, r))$ is open in the hyperbolic Riemann surface $L_{p}$. Choose $\epsilon > 0$ such that 
\[
\ov{(U_{\ti{q}})_{\epsilon}} \subset U \sm \ov{W},
\]
where $(U_{\ti{q}})_{\epsilon}$ denotes the $\epsilon-$neighourhood of the domain $U_{\ti{q}}$. Since $\alpha_{n_k} \to \alpha$ uniformly on compacts of $\mbb{D}$, therefore there exist $N_1 > 0$ such that $\alpha_{n_k}(D(z_0,r)) \subset (U_{\ti{q}})_{\epsilon}$ for all $k \ge N_1$. Now clearly $\alpha_{n_k}(z) \in W_{n_k} \sm \ov{W}$ for all $k \ge N_1$ and for all $z \in D(z_0,r)$. This gives us that $\alpha(z) \in F$ for all $z \in D(z_0, r)$, that is $\alpha(D(z_0,r)) \subset F$. So $\alpha(D(z_0,r)) \subset F \cap L_{q} = F \cap L_{p}$, which is a contradiction since $L_{p} \cap F$ has empty interior in $L_{p}$. Thus $\alpha(\mbb{D}) \subset \ov{W}$. 

\medskip

\no \textbf{(2)} Suppose on the contrary that $\eta_{n}$ doesn't converge to $\eta_{W}$ uniformly on the compacts of $(W \sm (S \cup E)) \cup (L_{p} \cap W)$. Let $K \subset W$ be a compact set on which $\eta_n \not\to \eta_W$ uniformly. Then for $\epsilon > 0$ and upto a subsequence, there exist a sequence $q_n \in W$ such that for all $n \in \mbb{N}$
\[
\vert \eta_n(q_n) - \eta_W (q_n)\vert \ge \ep.
\]
Upto another subsequence, suppose $q_n \to q$. Since $\eta_W$ is continuous on $W \sm E$, therefore $\vert \eta_{W}(q_n) - \eta_{W}(q)\vert < \frac{\ep}{4}$, for all $n \ge N$ for some positive $N > 0$. So, we get for all $n \ge N$
\begin{equation}\label{E:Inequality}
\vert \eta_{n}(q_n) -  \eta_{W}(q)\vert \ge \frac{\ep}{2}.
\end{equation}

If $q \in W \sm (S \cup E)$, Theorem \ref{T:UnifConv} gives us a contradiction to the above inequality. So, it suffices to assume that $q \in (L_{p} \cup W)$ and get a contradiction. Let $\alpha_{n} : \mbb{D} \to L_{q_n, n}$ be the uniformization of the leaf $L_{q_n, n}$ in $W_n$, such that $\alpha_n(0) = q_n$. Since $\alpha_{n}(\mbb{D}) \subset U$ for all $n$, and $U$ being taut, therefore $\alpha_n$ converges to a map $\alpha : \mbb{D} \to U$ uniformly on compacts of $\mbb{D}$. Again $\alpha_n \in \cal{O}(\mbb{D}, \cal{F})$ for all $n \in \mbb{N}$, therefore Lemma \ref{L:LimitUnif} tells us that $\alpha(\mbb{D}) \subset L_{q} \cup E$. If we prove that $\alpha(\mbb{D}) \subset L_{q, W} \cup E$, then the exact arguments that we made in the proof of Theorem \ref{T:UnifConv} can be used to show that $\alpha(\mbb{D}) \subset L_{q, W}$. In fact, $\alpha$ will be a uniformization of the leaf $L_{q,W}$, which will give a contradiction to the inequality (\ref{E:Inequality}), thus giving us the result. 

\no Here, it suffices to show that $\alpha(\mbb{D}) \subset \ov{W}$. To see this, note that using tautness of $W$, we have $\alpha(\mbb{D}) \subset W$, and therefore 
\[
\alpha(\mbb{D}) \subset W \cap (L_{q} \cup E).
\]
Now, since $\alpha(\mbb{D})$ is connected and $\alpha(0) = q$, we get $\alpha(\mbb{D}) \subset L_{q, W} \cup E$. 

Suppose if possible that there exist $s = \alpha(z_0) \in U \sm \ov{W}$ for some $z_0 \in \mbb{D}$. As we did in the proof of $(1)$, we choose a neighourhood $U_{s} \subset U$ of $s$ and $\delta > 0$ small enough such that $\ov{(U_{s})_{\delta}} \subset U \sm \ov{W}$. Now choose $r>0$ such that $\alpha(D(z_0, r)) \subset U_{s}$. Since $\alpha_n \to \alpha$ uniformly on compacts of $\mbb{D}$, therefore there exist $N_1 > 0$ such that $\alpha_{n}(D(z_0,r)) \subset (U_{s})_{\delta}$ for all $n \ge N_1$. Observe that for all $z \in D(z_0, r)$ and for all $n \ge N_1$, we have $\alpha_{n}(z) \in W_n \sm \ov{W}$, and therefore $\alpha(z) \in F$. Thus we get 
\[
\alpha(D(z_0, r)) \subset L_{p} \cap F \cap E.
\]

As $\alpha(0) = q \notin U_s$, therefore $\alpha : \mbb{D} \to L_{p} \cup E$ cannot be a constant map. Again, Lemma \ref{L:LimitUnif} tells us that $(\alpha)^{-1}(E) \subset \mbb{D}$ is discrete, and so $\mbb{D} \sm \alpha^{-1}(E)$ is connected. Therefore, the map $\alpha\vert_{\mbb{D} \sm \alpha^{-1}(E)} : \mbb{D} \sm \alpha^{-1}(E) \to L_{p}$ is open. We can choose $0 < \ti{r} < r$ such that $\alpha(D(z_0, \ti{r}) \sm \{z_0\}) \subset L_{p}$ and is open in $L_{p}$, that is $\alpha(D(z_0, \ti{r}) \sm \{z_0\}) \subset L_{p} \cap F$ which is a contradiction to the assumption that $L_{p} \cap F$ has empty interior. Thus $\alpha(\mbb{D}) \subset \ov{W}$ and we have our result.

\section{Proof of Theorem \ref{T:Hausdorff-Kernel}}
We will prove this in 3 steps.
\medskip

\no $(1)$ The first step is to show that $W = \text{ker}(W_n)$. Let $W_0 := \text{ker}(W_n)$ and let $p \in W$. Consider a connected neighourhood $U_p$ of $p$ in $W$ such that $0 \in U_p$ and $\ov{U}_p \subset W$ is compact. Therefore there exist $N_1 > 0$ such that 
\[
\ov{U}_p \subset W_n
\]
for all $n \ge N_1$, which gives us that $\ov{U}_p \subset \cap_{n \ge N_1} W_n = \ti{V}_{N_1}$. Since $0 \in U_p$ and $U_p$ is connected, therefore $p \in V_{N_1} \subset W_0$. Thus $W \subset W_0$.

Now let $p \in W_0$. Therefore, $p \in V_{N_2}$ for some $N_2 > 0$. We can choose a neighourhood $U_p$ of $p$ in $V_{N_2}$ such that $0 \in V_{N_2}$ and $\ov{U}_p \subset V_{N_2}$ is compact. So, we get $\ov{U}_{p} \subset W_n$ for all $n \ge N_2$. Suppose if possible $p \notin \ov{W}$. Therefore there exist $N_3 > 0$ such that $p \notin W_n$ for all $n \ge N_3$ which is a contradiction since $p \in \ov{U}_{p} \subset W_n$ for all $n \ge N_2$. Thus $p \in \ov{W}$. Now suppose if possible $p \in \partial W$. Since $H(\partial W_n, \partial W) \to 0$, therefore for any neighourhood $\hat{U}_p$ of $p$, there exist $N_4 > 0$ such that 
\[
\partial W_n \cap \hat{U}_{p} \neq \emptyset
\]
for all $n \ge N_4$, which is a contradiction since $U_p \cap \partial W_n = \emptyset$ for all $n \ge N_2$. So $W_0 \subset W$.

\no Thus $W = W_0 = \text{ker}(W_n)$.
\medskip

\no $(2)$ In this step, we will show that $W_n \to W$ in the kernel sense. By definition, we need to show that for any subsequence $\{W_{n_j}\}_{j \in \mbb{N}}$, we have $\text{ker}(W_{n_j}) = W$. It is easy to see that $\rho(W_n, W) \to 0$ implies $\rho(W_{n_j}, W) \to 0$ as $j \to +\infty$. Now using Step $(1)$, we get $\text{ker}(W_{n_j}) = W$. Thus $W_n \to W$ in the kernel sense.
\medskip

\no $(3)$ The final step is to show that $F = \emptyset$. Since $F = \cup_{\sigma} F_{\sigma}$. Consider any subsequence $\sigma = \{W_{n_j}\}_{j \in \mbb{N}}$ of the sequence $\{W_n\}_{n \in \mbb{N}}$ such that $F_{\sigma} \neq \emptyset$. Let $p \in F_{\sigma}$ which by definition tells us that $p \notin \ov{W}$. Therefore, there exist a sequence of points $\{p_{n_j}\}_{j \in \mbb{N}}$ such that $p_{n_j} \in W_{n_j} \sm \ov{W}$ for all $j$, and $p_{n_j} \to p$. We can choose a neighourhood $U_p$ of $p$ and $\ep > 0$ small enough so that 
\[
\ov{U}_{p} \cap (W)_{\ep} = \emptyset,
\]
where $(W)_{\ep}$ is the epsilon neighourhood of the domain $W$. Therefore, there exist $N_5 > 0$ such that $\ov{U}_p \cap W_n = \emptyset$, for all $n \ge N_5$. This is a contradiction since $p_{n_j} \in \ov{U}_p \cap W_{n_j}$ for all $n_j \ge N_6$, for some $N_6 > 0$. 

\no Thus $F_{\sigma} = \emptyset$, which gives us that $F = \emptyset$.

\section{Remarks \& Examples}

\no \textbf{Remark 1:} It should be noted that this \textit{defective set} $S$ is not always \textit{very small}. It is possible that $S$ could be dense in $W$. Suppose $U = P(0, (5,5)) \subset \mbb{C}^2$. Consider a countable dense subset $\{q_j\}_{j \in \mbb{N}} \subset \partial W$, where $W = P(0,(1,1))$. Let $L_j$ be the complex line passing through the point $q_j$ and the origin in $U$. Consider a sequence of decreasing neighourhoods $\{R_{j,m}\}_{m \in \mbb{N}}$ of the line $L_j$ for each $j$, such that 
\[
\cap_{m \in \mbb{N}} R_{j,m} = L_j,
\]
for all $j \in \mbb{N}$. Now define $W_{j,m} := P(0,(1,1)) \cup R_{j,m} \subset U$ for all $j,m \in \mbb{N}$. Now we would like to define a sequence of domains $\{W_n\}_{n \in \mbb{N}}$ which contains $W_{j,m}$ for all $j,m \in \mbb{N}$. One way is to start with $W_1 := W_{1,1}$ and follow the process shown in the Figure \ref{fig:Sequence}, with the first few terms would be $W_{1,1}, W_{1,2}, W_{2,1}, W_{3,1}, W_{2,2}, W_{1,3}$, and so on.

\begin{figure}[htp]
    \centering
    \includegraphics[width=10cm]{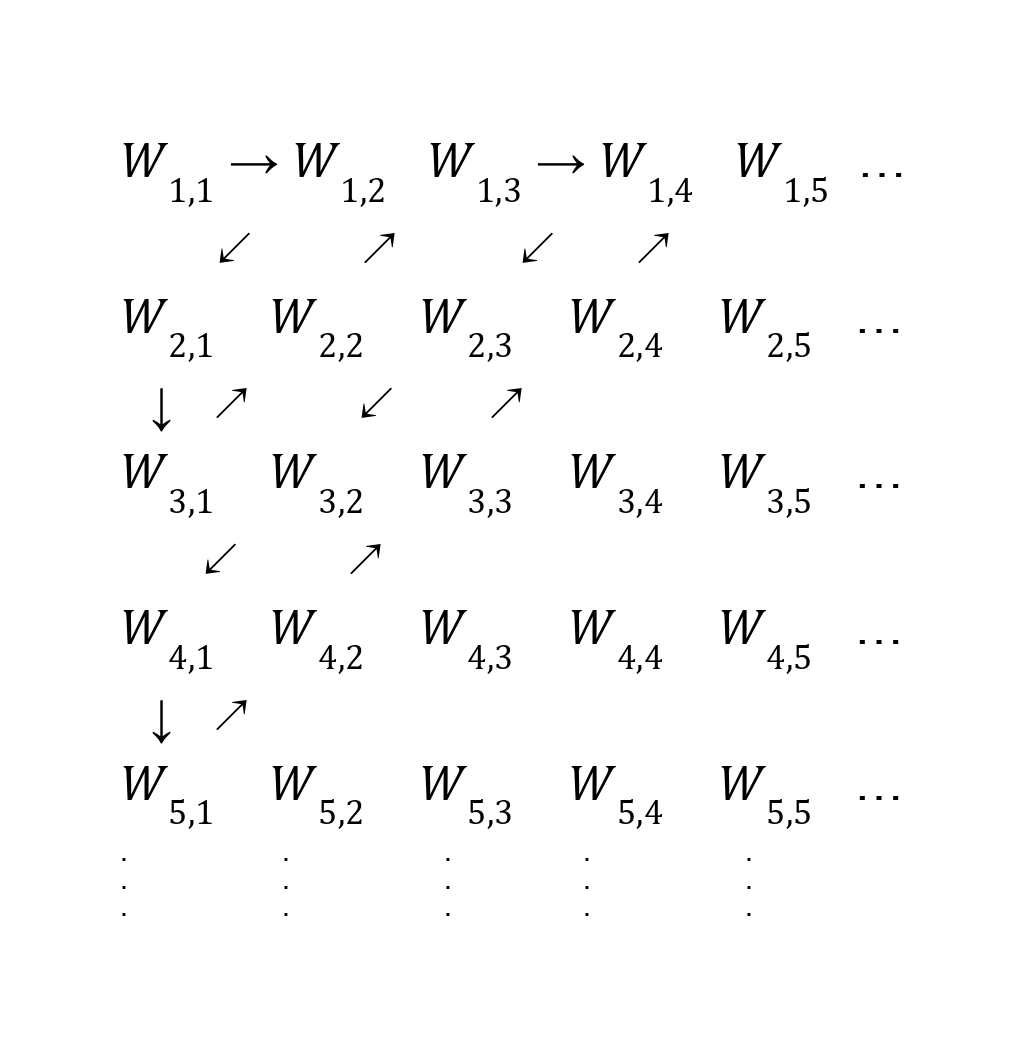}
    \caption{}
    \label{fig:Sequence}
\end{figure}

\no It is easy to see that $W_n \to W$ in the kernel sense, and for each $j \in \mbb{N}$, $\{W_{j,m}\}_{m \in \mbb{N}}$ is a subsequence of $\{W_n\}_{n \in \mbb{N}}$. Therefore, the defective set $S_j = L_j \cap W$ corresponding to the subsequence $\{W_{j,m}\}_{m \in \mbb{N}}$ is subset of the defective set $S$ of the sequence of domains $\{W_n\}_{n \in \mbb{N}}$. Thus $L_j \cap W \subset S$ for all $j \in \mbb{N}$. But note that $\cup_{j \ge 1} (L_j \cap W)$ is a dense subset of W. Therefore, the defective set $S$ is a dense subset of $W$.

\medskip 

\no \textbf{Remark 2:} Although we have proved the results in this note for connected domains in $\mbb{C}^N$, but one can check that everything still follows along the same lines if we suppose $U$ to be an open subset of a complex hermitian manifold $(M,g)$, where $g$ is a smooth hermitian metric on $M$. Also, as we did in \cite{GV1}, we can just work with smooth Riemann surface laminations $\cal{L}$ instead of holomorphic foliations, and all the results still holds true. We proved the results for the case $(M,g) = (\mbb{C}^N, \vert dz\vert^2)$ and a hyperbolic Riemann surface foliation $\cal{F}$ on $U \subset \mbb{C}^N$ just to keep calculations simple.

\medskip

\begin{exam}
Let $\cal{F}$ be the hyperbolic singular holomorphic foliation induced by the vector field 
\[
X = x\frac{\partial}{\partial x} + zy \frac{\partial}{\partial y} + zy \frac{\partial}{\partial z}
\]
on the open polydisc of radius $r = (5,5,5)$ denoted by $U= P(0,r) \subset \mbb{C}^3$. The singular set of $\cal{F}$ is $E = \{(0,y,z) \in U : yz = 0\} = \{y-\text{axis}\}\cup \{z-\text{axis}\}$ (here dim$(E) = 1$). Let $W_n = P(0, (1,1,1)) \cup P(0, (\frac{1}{n}, 2, \frac{1}{n}))$ and $W = P(0, (1,1,1))$. We can check that $W_n \to W$ in the kernel sense with the set $F = \{(0,y,0) \in U : y \in D(0,2) \sm \ov{D(0,1)}\}$. Note that $F \subset E$, therefore the defective set corresponding to the foliation $\cal{F}$ is empty, that is $S = \emptyset$.

As shown in \cite{GV2} (Example 1.17), using the fact that $\Sigma_1 = \{x=0\}, \Sigma_2 = \{y=0\} \; \& \; \Sigma_3 = \{z=0\}$ are $\cal{F}-$invariant to conclude that $\eta_{W}$ is continuous on $W \sm E$. It has also been proved that $\cal{F}$ is transversal type. Therefore, $\eta_{W}$ can be extended continuously on whole of $W$.

\no Since $W,U$ are bounded taut domains in $\mbb{C}^3$, therefore Theorem \ref{T:UnifConv} tells us that $\eta_{n} \to \eta_{W}$ uniformly on the compacts of $W$.

\end{exam}

\medskip

\begin{exam}
Let $\cal{F}$ be the hyperbolic singular holomorphic foliation induced by the vector field 
\[
X = xy\frac{\partial}{\partial x} + zy \frac{\partial}{\partial y} + zx \frac{\partial}{\partial z}
\]
on the open polydisc of radius $r = (5,5,5)$ denoted by $U= P(0,r) \subset \mbb{C}^3$. The singular set of $\cal{F}$ is given by $E = \{(x,y,z) \in U : xy = yz = zx = 0\} = \{x-\text{axis}\} \cup \{y-\text{axis}\} \cup \{z-\text{axis}\}$ (here dim$(E) = 1$). Let $W_n = P(0, (1,1,1)) \cup P(0, (\frac{1}{n}, 2, \frac{1}{n}))$ and $W = P(0, (1,1,1))$. Here $W_n \to W$ in the kernel sense with the set $F = \{(0,y,0) \in U : y \in D(0,2) \sm \ov{D(0,1)}\}$. Note that $F \subset E$, therefore the defective set corresponding to the foliation $\cal{F}$ is empty, that is $S = \emptyset$.

As shown in \cite{GV2} (Example 1.18), using the fact that $\Sigma_1 = \{x=0\}, \Sigma_2 = \{y=0\} \; \& \; \Sigma_3 = \{z=0\}$ are $\cal{F}-$invariant to conclude that $\eta_{W}$ is continuous on $W \sm E$.

\no Take $p= (x,0,0) \in E$ for $x \neq 0$. Observe that $C_{p}E = \langle e_1\rangle$, and therefore $(1,0,0) = e_1 \in C_{p}E$. Consider the sequence $p_n = (x,\frac{1}{n},0) \in W \sm E$ and note that $e_1 \in T_{p_n}\cal{F}$ for all $n \in \mbb{N}$. Therefore $e_1 \in \overline{C_{p}\cal{F}}$. Thus $\cal{F}$ is not transversal type at $p$. Since $p$ is arbitrary, $\cal{F}$ is not transversal type at any point of $\{(x,0,0) : x \in \mbb{D}^{\ast}\} \subset E$.

\no Now on $\Sigma_3 = \{z = 0\}$, the foliation $\cal{F}_{W}$ is given by $X = xy \frac{\partial}{\partial x}$. One can check that the leaf of $\cal{F}_{W}$ passing through $p_n$ is given by $L_{p_n, W} = \{(\xi,\frac{1}{n},0): \xi \in \mbb{D}^{\ast}\} \cong \mbb{D}^{\ast}$ for all $n \in \mbb{N}$. Therefore $\eta_{W}(p_n) = \eta_{W}(p_m) \neq 0$ for all $n,m \in \mbb{N}$. 

\no On $\Sigma_{2} = \{y =0\}$, the foliation $\cal{F}_{W}$ is given by $X= xz\frac{\partial}{\partial z}$. Here the leaf of $\cal{F}_{W}$ passing through $\tilde{p} = (x,0,z)$ is given by $L_{\ti{p},W} = \{(x,0,\xi) : \xi \in D(0,r_3)^{\ast}\} \cong \mbb{D}^{\ast}$. Note that $L_{\ti{p},W} \cup \{p\} \cong \mbb{D}$, i.e., $L_{\ti{p},W}$ is a separatrix through $p$. Take any sequence $\{q_n\}_{n \in \mbb{N}} \subset L_{\ti{p},W}$ such that $q_n \to p$, we have $\eta_{W}(q_n) \to 0$.

\no Therefore, we get two sequences $p_n, q_n \to p$, but $\eta_{W}(p_n) \to k \neq 0$, and $\eta_{W}(q_n) \to 0$. Thus, $\eta_{W}$ cannot be extended continuously to $p$.

We use similar arguments to check that for every $p \in E \sm\{0\} =: \tilde{E}$, $\cal{F}$ is not transversal type at $p$ (in fact $\cal{F}$ is not transversal type for any $p \in E$). Once again, by looking at the structure of leaves in the invariant hyperplanes $\Sigma_{i}$'s, we can conclude that $\eta_{W}$ can not be extended continuously on any point $p \in \tilde{E} = E \sm \{0\}$.

Since $W,U$ are bounded taut domains in $\mbb{C}^3$, therefore Theorem \ref{T:UnifConv} tells us that $\eta_{n} \to \eta_{W}$ uniformly on the compacts of $W \sm E$.

\end{exam}

\medskip

\begin{exam}
Let $\cal{F}$ be the hyperbolic singular holomorphic foliation induced by the vector field 
\[
X = x\frac{\partial}{\partial x} + 2y \frac{\partial}{\partial y}
\]
on the open polydisc of radius $r = (5,5)$ denoted by $U= P(0,r) \subset \mbb{C}^2$. The singular set of the foliation $\cal{F}$ is $E = \{(0,0)\}$. 

Consider the complex line segment $R = \{(x,y) \in P(0,(3,3)) \mid y = x\}$. Let $V_n \subset U$ be a sequence of decreasing open connected neighourhoods of the segment $R$ such that $\cap_{n \ge 1} V_{n} = R$. Now let $W_n = P(0, (1,1)) \cup V_n$, and $W = P(0, (1,1))$. Note that $W_n \to W$ in the kernel sense with the set $F = \{(x,y) \in P(0,(3,3)) \sm \ov{P(0, (1,1))} \mid y = x\}$.

\no Let $p = (x_0, y_0) \in U \sm E$, the leaf $L_{p}$ of $\cal{F}$ is given by the solution of the differential equation 
\[
\frac{dy}{dx} = \frac{2y}{x},
\]
with the initial condition $y(x_0) = y_0$. One can check that 
\[
L_{p} = \{(x,y) \in U \sm E \mid x_{0}^2 y = y_0 x^2\}.
\]
Now if $q = (x,x) \in R \cap L_{p}$, then $x_{0}^2 x = y_0 x^2$. Since $(0,0) \notin L_{p}$, therefore $R \cap L_{p} \neq \emptyset$ if and only if $x_0 y_0 \neq 0$. If $p \in U \sm E$ is such that $x_0 y_0 \neq 0$, then $q = (\frac{x_{0}^2}{y_0}, \frac{x_{0}^2}{y_0})$, that is 
\[
R \cap L_{p} = \{(\frac{x_{0}^2}{y_0}, \frac{x_{0}^2}{y_0})\}.
\]
So, $F \cap L_{p} \neq \emptyset$ if and only if $\vert y_{0}\vert < \vert x_{0}\vert^2$. Consider the set 
\[
T = \{(x,y) \in W \sm E : \vert y\vert < \vert x\vert^2\} \subset W.
\]
Note that if $p \in T$, then we can see from above that $L_{p, W} \subset S$, where $S$ is the defective set in $W$ corresponding to the foliation $\cal{F}$. Since $p \in T$ is arbitrary, therefore $T \subset S$. Now one can easily check from the description of leaves of $\cal{F}$ that for $p \in T$, the leaf $L_{p,W}$ is contained in $T$, that is $L_{p,W} \subset T$. Therefore, we get $S \subset T$. Thus, the defective set $S = T$ is an open set in $W$.

But as we already observed that for every $p \in S$, the intersection $L_{p} \cap F$ is a singleton set. Therefore, for every $p \in S$, 
\[
\text{int}(L_{p} \cap F) = \emptyset,
\]
where $\text{int}(L_{p} \cap F)$ denotes the interior of $L_{p} \cap F$ as a subset of the leaf $L_{p}$. Thus, every $p \in S$ is a removable defective point, that is $\hat{S} = S$.

\no Since $W,U$ are bounded taut domains in $\mbb{C}^2$, \cite{GV2} gives us the continuity of $\eta_{W}$ on $W \sm E$. Furthermore, $\cal{F}$ is transversal type at $0 \in E$, which gives us that $\eta_{W}$ is continuous on whole of $W$. 

\no Theorem \ref{T:RemovableDefectiveSet} now tells us that $\eta_{n} \to \eta_{W}$ uniformly on the compacts of $W$.

\end{exam}

We will end the section with a few questions.

\medskip

\no \textbf{Question 1:} In Theorem \ref{T:RemovableDefectiveSet}, we mentioned a sufficient condition for a defective point $p \in S$ to be a removable defective point, that is $p \in \hat{S}$. But we can easily construct an example of a sequence of domains and foliation to show that the condition in Theorem \ref{T:RemovableDefectiveSet} is not necessary. It would be interesting to know the complete description of the set $\hat{S}$.

\medskip

\no \textbf{Question 2:} Given $(W_n, W, U, \cal{F})$, is it really necessary to assume the continuity of $\eta_{W}$ on whole $W \sm E$ in order to prove the uniform convergence part in Theorem \ref{T:UnifConv} and Theorem \ref{T:RemovableDefectiveSet}? For instance, if the defective set $S \subset W$ is closed subset, then the uniform convergence of $\eta_{n}$ to $\eta_{W}$ still holds on the compacts of $W \sm (S \cup E)$ even if we only assume that the function $\eta_{W}$ is continuous on $W \sm (S \cup E)$. The proof will be exactly the same as we did for Theorem \ref{T:UnifConv} and Theorem \ref{T:RemovableDefectiveSet}.

\end{document}